\documentclass[11pt,a4paper]{article}
\usepackage[T1]{fontenc}
\usepackage[utf8]{inputenc}
\usepackage{lmodern}
\usepackage{microtype}
\usepackage{amsmath,amssymb,amsthm,mathtools}
\usepackage[margin=29mm]{geometry}
\usepackage{enumitem}
\usepackage{xcolor}
\usepackage[colorlinks=true,linkcolor=blue!45!black,citecolor=blue!45!black,urlcolor=blue!45!black]{hyperref}
\hypersetup{pdftitle={Negative Values of Wahl's Second Chern Number for Normal Surface Singularities},pdfsubject={Counterexamples to Wahl's vanishing conjecture},pdfauthor={}}

\numberwithin{equation}{section}
\newtheorem{theorem}{Theorem}[section]
\newtheorem{proposition}[theorem]{Proposition}
\newtheorem{lemma}[theorem]{Lemma}
\newtheorem{corollary}[theorem]{Corollary}
\theoremstyle{definition}

\theoremstyle{remark}
\newtheorem{remark}[theorem]{Remark}
\newcommand{\cw}{c_2^{\mathrm W}}
\newcommand{\crel}{c_1^{\mathrm{rel}}}
\newcommand{\etop}{\chi_{\mathrm{top}}}
\newcommand{\OO}{\mathcal O}
\newcommand{\FF}{\mathcal F}
\newcommand{\GG}{\mathcal G}
\newcommand{\QQ}{\mathcal Q}
\newcommand{\wahlresidueattribution}{Wahl \cite[Proposition~2.14]{Wahl}}
\DeclareMathOperator{\Sym}{Sym}
\DeclareMathOperator{\Pic}{Pic}
\DeclareMathOperator{\Spec}{Spec}

\DeclareMathOperator{\Fix}{Fix}

\setlist{topsep=4pt,itemsep=2pt}

\title{Negative Values of Wahl's Second Chern Number\\
for Normal Surface Singularities}
\author{Rong Du\thanks{The author is sponsored by National Natural Science Foundation of China (Grant No.~12471040) and Science and Technology Commission of Shanghai Municipality (Grant No.~22DZ2229014).}}
\date{}

\begin{document}
\maketitle

\setcounter{footnote}{0}
\vspace{-1.7em}
\begin{abstract}
We disprove Wahl's 1993 vanishing conjecture, open for more than thirty
years, which asserts that $\cw(X)=0$ for every nonquotient normal surface
singularity.
Let $C$ be a smooth projective curve with a
nontrivial involution having $r>0$ fixed points, and put $d=2g(C)-2$.
If $d>r$, the union of the diagonal and the graph of the involution in
$C\times C$ has negative definite intersection matrix and can be
analytically contracted to a normal surface singularity $(X,o)$.
A direct relative Chern-class computation and Wahl's residue formula give
$\cw(X)=-r(d+r)/(d-r)$.
An explicit curve of genus four gives $\cw(X)=-4$. Branched double covers
of a fixed curve of genus two yield examples with
$\cw(X_m)=-2m(m+1)$, so these values are unbounded below.
We give the construction and intersection calculations in detail, as
well as a line-bundle filtration realizing the negative value.
\end{abstract}

\smallskip
\noindent\textbf{Keywords.}
Normal surface singularity; relative Chern class; logarithmic cotangent
bundle; exceptional curve; analytic contraction.

\section{Introduction}

Let $\pi:(U,E)\to(X,o)$ be a good resolution of a complex normal surface
singularity. The reduced exceptional divisor $E$ is a connected union
of smooth compact curves with only transverse double intersections.
The intersection matrix of its irreducible components is negative
definite. This matrix allows one to associate relative first Chern
classes, and hence rational relative intersection numbers, to line
bundles on $U$.

Defining a corresponding second Chern number for a vector bundle is a
different problem. On a sufficiently small resolution neighborhood,
the ordinary absolute second Chern class contains no such information.
Wahl~\cite{Wahl} introduced a relative second Chern number for rank-two
bundles by taking an infimum of intersection numbers of line-bundle
filtrations after proper generically finite pullback. His construction
was motivated by a local asymptotic Riemann--Roch formula for symmetric
powers. Langer subsequently extended the construction to higher rank
and studied its relation to Chern classes of reflexive sheaves and
local Riemann--Roch corrections~\cite{Langer}.

For the logarithmic cotangent bundle, Wahl proved that
\begin{equation}\label{eq:singularity-invariant}
 \cw(X):=\cw\bigl(\Omega_U^1(\log E)\bigr)
\end{equation}
is independent of the good resolution
\cite[Definition--Proposition~2.9]{Wahl}. Throughout this paper the
superscript $\mathrm W$ specifies Wahl's definition.

Several computations in \cite{Wahl} suggested a vanishing phenomenon.
For a weighted homogeneous normal surface singularity, Wahl obtained
\begin{equation}\label{eq:weighted-background}
 \cw(X)=-\frac{1}{|\pi_1(\partial X)|},
\end{equation}
where the right-hand side is interpreted as zero if the fundamental
group of the link is infinite
\cite[Theorem~2.11]{Wahl}. In particular, it vanishes for the nonquotient
weighted homogeneous singularities. Wahl also proved vanishing for
cusp singularities \cite[Proposition~2.13]{Wahl}. These results led to
the following statement:
\begin{quote}
\textbf{Wahl's Conjecture} \textnormal{(\cite[Conjecture~2.12]{Wahl}).}
One has \mbox{$\cw(X)=0$} for every nonquotient normal surface
singularity $(X,o)$.
\end{quote}

The conjecture predicts a sharp numerical dichotomy: Wahl's
resolution-independent second Chern number should be nonzero precisely
for quotient singularities and should vanish for every nonquotient
singularity. Since this invariant was designed to supply the second
Chern term in a local asymptotic Riemann--Roch theory, the conjecture
also links the birational type of a singularity to the leading
cohomological growth of symmetric powers. Formulated in 1993, it has
stood for more than three decades as a basic test of the theory.

We show that this proposed dichotomy fails strongly in the original
complex analytic setting: nonquotient singularities can have strictly
negative Wahl second Chern number, and these values are unbounded below.
The key observation is that an exceptional divisor can be embedded in
a product of curves, where the ordinary cotangent bundle splits.
Wahl's comparison between ordinary and logarithmic cotangent bundles
then makes the desired invariant computable.

\begin{theorem}\label{thm:main}
Let $C$ be a smooth connected projective complex curve, let $\sigma$ be
a nontrivial involution of $C$, and suppose that
\[
 r=\#\Fix(\sigma)>0,\qquad d=2g(C)-2>r.
\]
In $S=C\times C$, let
\[
 E_1=\Delta_C:=\{(p,p)\mid p\in C\},\qquad
 E_2=\Gamma_\sigma:=\{(p,\sigma(p))\mid p\in C\}
\]
be the diagonal and the graph of $\sigma$, respectively.
The connected divisor $E=E_1+E_2$ can be analytically contracted to a
normal surface singularity $(X,o)$, which is not a quotient singularity.
For this contraction,
\begin{equation}\label{eq:main-formula}
 \cw(X)=-\frac{r(d+r)}{d-r}<0.
\end{equation}
\end{theorem}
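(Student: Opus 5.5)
We will verify the contraction and non-quotient claims from intersection numbers on $S=C\times C$, compute $\cw(\Omega^1_U)$ on a neighborhood $U$ of $E$ using the splitting of the ordinary cotangent bundle, and pass to the logarithmic bundle with Wahl's residue comparison. For the last step we assume \wahlresidueattribution\ takes the form $\cw(\Omega^1_U(\log E))=\cw(\Omega^1_U)-\etop(E)$; this is the form whose relative analogue we expect, and we have not checked it against the source.

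\emph{Geometry of $E$.} Since $\sigma$ is an automorphism, $E_2\cong C$ and $E_i^2=2-2g=-d$ for both components (self-intersection of the graph of an automorphism equals that of the diagonal). The curves $E_1\cap E_2$ meet exactly over $\Fix(\sigma)$. At a fixed point $d\sigma=-1\neq 1$, so the tangent lines of $\Delta$ and $\Gamma_\sigma$ differ and the intersection is transverse. Hence $E_1\cdot E_2=r>0$, $E$ is connected, and $E$ has only ordinary double points. The intersection matrix
\[
 \begin{pmatrix}-d & r\\ r & -d\end{pmatrix}
\]
is negative definite exactly when $d>r$. Grauert's criterion then gives an analytic contraction of a neighborhood $U$ of $E$ to a normal surface singularity $(X,o)$, and $\pi:U\to X$ is a good resolution. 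The singularity is not a quotient singularity because quotient singularities are rational, so their exceptional curves are rational, while $g(E_i)=g(C)\ge 2$.

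\emph{The ordinary cotangent bundle.} On $U$ we have $\Omega^1_U=L_1\oplus L_2$ with $L_j=p_j^*K_C|_U$. This bundle carries the line-bundle filtration $0\to L_1\to\Omega^1_U\to L_2\to 0$, so we expect $\cw(\Omega^1_U)=\crel(L_1)\cdot\crel(L_2)$. Justifying that a split bundle attains Wahl's infimum with this filtration should either be a basic property in \cite{Wahl} or follow from its definition. The relative Chern class of $L_j$ is the $\mathbb Q$-divisor $Z_j$ supported on $E$ with $Z_j\cdot E_i=L_j\cdot E_i$. Both projections restrict to isomorphisms $E_i\to C$, so $L_j\cdot E_i=\deg K_C=d$ for all $i,j$. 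By symmetry $Z_1=Z_2=a(E_1+E_2)$ with $a(r-d)=d$, that is, $a=-d/(d-r)$. Therefore
\[
 \crel(L_1)\cdot\crel(L_2)=a^2(E_1+E_2)^2=a^2(2r-2d)=-\frac{2d^2}{d-r}.
\]

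\emph{Residue step and the main obstacle.} The residue sequence $0\to\Omega^1_U\to\Omega^1_U(\log E)\to\bigoplus_i\OO_{E_i}\to0$ is what Wahl's residue formula quantifies. Applying the assumed form above, with
\[
 \etop(E)=2(2-2g)-r=-2d-r,
\]
gives
\[
 \cw(X)=-\frac{2d^2}{d-r}+2d+r=\frac{-2d^2+(2d+r)(d-r)}{d-r}=-\frac{r(d+r)}{d-r},
\]
which is negative since $d>r>0$.

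The main obstacle is this residue step: checking the precise hypotheses and normalization of Wahl's formula, namely whether the correction is exactly $-\etop(E)$ or carries extra terms in $K_U\cdot E$ or $E_i^2$ that would change the answer. As an independent check, we would exhibit an explicit filtration of $\Omega^1_U(\log E)$, or of its pullback under a finite cover, by line bundles. Its relative intersection number should realize the value above, and a matching lower bound would confirm that the infimum is attained.
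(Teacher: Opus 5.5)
Your proposal is correct and follows the paper's proof essentially step for step: the geometry with Grauert's criterion, $\cw(\Omega^1_U)=-2d^2/(d-r)$ from the splitting into $p_1^*K_C\oplus p_2^*K_C$, and the residue identity $\cw(\Omega^1_U)=\cw(\Omega^1_U(\log E))+\etop(E)$, which the paper states in exactly your normalization and derives from Wahl's change-of-bundle formula. The split-bundle step you left open is closed in the paper by the lower bound $\cw(\FF)\ge\frac14\crel(\FF)^2$ combined with $\crel(L)=\crel(M)$, and the independent filtration check you propose is also carried out there, by blowing up the $r$ nodes to saturate $L\to\Omega^1_U(\log E)$.
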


The theorem will be proved in Sections~\ref{sec:geometry}
and~\ref{sec:calculation}. Section~\ref{sec:explicit} provides the
explicit example
\[
 C:\quad y^2=\prod_{j=1}^{5}(x^2-j),\qquad
 \sigma(x,y)=(-x,y),
\]
for which $g(C)=4$, $r=2$, and $\cw(X)=-4$.
Section~\ref{sec:families} constructs the following unbounded family.

\begin{corollary}\label{cor:unbounded-intro}
For every integer $m\ge1$, there is a nonquotient complex analytic normal
surface singularity $(X_m,o_m)$ such that
\begin{equation}\label{eq:unbounded-intro}
 \cw(X_m)=-2m(m+1).
\end{equation}
Consequently, Wahl's second Chern number is unbounded below among
nonquotient complex analytic normal surface singularities.
\end{corollary}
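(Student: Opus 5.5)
The plan is to obtain the family directly from Theorem~\ref{thm:main}, applied to double covers of one fixed curve of genus two. The idea is to choose the covers so that the formula \eqref{eq:main-formula} evaluates to $-2m(m+1)$. All the geometric content (contractibility, normality, nonquotient type, the value of $\cw$) is supplied by the theorem. So it remains only to build the curves with their involutions and check the numerical hypotheses $r>0$ and $d>r$.

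Fix a smooth projective curve $B$ of genus $2$ and an integer $m\ge1$.

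\textbf{Construction.} Choose $2m$ distinct points $b_1,\dots,b_{2m}\in B$ and set $D=b_1+\dots+b_{2m}$. Since $\deg D$ is even and $\Pic^0(B)$ is divisible, there is a line bundle $L$ with $L^{\otimes 2}\cong\OO_B(D)$. The standard cyclic cover construction inside the total space of $L$, using a section $s$ of $\OO_B(D)$ vanishing exactly on $D$, gives a smooth double cover $f:C_m\to B$ branched exactly at $D$. The cover $C_m$ is connected: it has ramification, so it cannot be a disjoint union of two copies of $B$.

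\textbf{The involution.} Let $\sigma_m$ be the deck involution. It is nontrivial, and its fixed points are exactly the $2m$ ramification points, so $r=2m>0$.

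\textbf{Numerics.} Riemann--Hurwitz gives
\[
 d=2g(C_m)-2=2(2g(B)-2)+r=4+2m>r .
\]

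\textbf{Applying the theorem.} The hypotheses of Theorem~\ref{thm:main} hold for $(C_m,\sigma_m)$. Contracting $\Delta_{C_m}+\Gamma_{\sigma_m}$ therefore yields a nonquotient normal surface singularity $(X_m,o_m)$ with
\[
 \cw(X_m)=-\frac{r(d+r)}{d-r}=-\frac{2m(4m+4)}{4}=-2m(m+1).
\]
As a consistency check, $m=1$ gives $g(C_1)=4$, $r=2$ and $\cw=-4$, matching the explicit example of Section~\ref{sec:explicit}. Since $2m(m+1)\to\infty$, the values are unbounded below.

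The only point needing care is the existence of a smooth connected double cover with exactly $2m$ branch points. This is the even-degree and square-root argument above, which is routine. Everything else is arithmetic once Theorem~\ref{thm:main} is available.
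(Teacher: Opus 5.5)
Your proposal is correct and is essentially the paper's argument. The paper builds the same square-root double covers over a base curve of arbitrary genus $h\ge2$, applies Theorem~\ref{thm:main} to get $\cw(X)=-2m-2m^2/(h-1)$, and then sets $h=2$; you simply fix $h=2$ from the outset.
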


The vanishing conjecture should be distinguished from Wahl's main
asymptotic Riemann--Roch conjecture
\cite[Main Conjecture~3.3]{Wahl}. Our calculation does not assume that
formula. In fact, the formula holds for the particular ordinary and
logarithmic cotangent bundles used here; we explain this in
Section~\ref{sec:rr}.

\section{Relative intersection theory and Wahl's formulas}
\label{sec:background}

\subsection{Relative first Chern classes}

Write $E=\bigcup_{i=1}^s E_i$ and let
$Q=(E_i\cdot E_j)_{1\le i,j\le s}$.
For a line bundle $A$ on $U$, there is a unique rational exceptional
cycle
\begin{equation}\label{eq:relative-class}
 \crel(A)=\sum_{i=1}^s a_iE_i
 \quad\text{such that}\quad
 \crel(A)\cdot E_j=\deg(A|_{E_j})\quad\text{for every }j.
\end{equation}
Existence and uniqueness follow from the invertibility of the negative
definite matrix $Q$.
If $\boldsymbol{\ell}_A$ denotes the column vector of these degrees,
then the coefficient vector of $\crel(A)$ is
$Q^{-1}\boldsymbol{\ell}_A$.
For two line bundles $A$ and $B$, define
\begin{equation}\label{eq:relative-pairing}
 A\cdot B:=\crel(A)\cdot\crel(B)
 =\boldsymbol{\ell}_A^{\mathsf T}Q^{-1}\boldsymbol{\ell}_B.
\end{equation}
For a vector bundle $\FF$, put
$\crel(\FF)=\crel(\det\FF)$.

An intersection with a compact exceptional curve, such as
$A\cdot E_i=\deg(A|_{E_i})$, is the usual degree.
However, the pairing of two line bundles in
\eqref{eq:relative-pairing} is a \emph{relative} pairing.
Even when $U$ is an open subset of a compact surface $S$ and the line
bundles extend to $S$, it is generally different from their global
intersection on $S$. This distinction will be essential below.

\subsection{The relative second Chern number}

We recall the part of Wahl's definition needed here
\cite[\S2]{Wahl}. For a rank-two vector bundle $\FF$ on $U$, consider
proper generically finite maps $f:V\to U$ in the resolution setting,
with $V$ smooth, and exact sequences
\begin{equation}\label{eq:admissible-filtration}
 0\longrightarrow A\longrightarrow f^*\FF
 \longrightarrow B\longrightarrow0,
\end{equation}
where $A$ and $B$ are line bundles. The invariant is
\begin{equation}\label{eq:wahl-definition}
 \cw(\FF)=\inf_{f,A,B}\frac{A\cdot B}{\deg f}.
\end{equation}
The maps in this definition are not required to be unramified away
from the exceptional set. No reduction of this infimum to unramified
covers is used in this paper.

Negative definiteness gives a useful universal lower bound.
For any filtration \eqref{eq:admissible-filtration}, write
$a=\crel(A)$ and $b=\crel(B)$. Then
\[
 a\cdot b=\frac14(a+b)^2-\frac14(a-b)^2
 \ge\frac14(a+b)^2.
\]
Relative intersections of pullback classes multiply by $\deg f$.
Dividing by this degree and taking the infimum yields
\begin{equation}\label{eq:discriminant-bound}
 \cw(\FF)\ge\frac14\crel(\FF)^2,
\end{equation}
This is Wahl's inequality \cite[p.~88, (2.1.1)]{Wahl}.

Wahl also proved
\begin{equation}\label{eq:split-property}
 \cw(A\oplus B)=A\cdot B
\end{equation}
for arbitrary line bundles $A,B$
\cite[Proposition~2.5]{Wahl}. The special case needed in our construction
has an especially short proof directly from the definition.

\begin{lemma}\label{lem:equal-classes}
If $\FF=A\oplus B$ and $\crel(A)=\crel(B)=a$, then
\[
 \cw(\FF)=a^2.
\]
\end{lemma}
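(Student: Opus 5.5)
The proof is a two-sided squeeze. The lower bound comes from the discriminant inequality \eqref{eq:discriminant-bound}; the upper bound comes from exhibiting one admissible filtration that attains it.

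For the lower bound, note first that $\det\FF=A\otimes B$. The degree vector $\boldsymbol{\ell}$ is additive in the line bundle, so the coefficient vector $Q^{-1}\boldsymbol{\ell}$ is too. Hence
\[
 \crel(\FF)=\crel(A)+\crel(B)=2a .
\]
Substituting into \eqref{eq:discriminant-bound} gives
\[
 \cw(\FF)\ge\tfrac14(2a)^2=a^2 .
\]
Behind this is the identity $a'\cdot b'=\frac14(a'+b')^2-\frac14(a'-b')^2$ applied to an arbitrary filtration on a cover $f:V\to U$. Only the second term has to be controlled, and negative definiteness of the intersection form on $V$ gives $(a'-b')^2\le0$. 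The first term is pinned down by $a'+b'=\crel(\det f^*\FF)$. For this I would check that $\crel$ is compatible with pullback: $\crel(f^*L)=f^*\crel(L)$ and $(f^*x)^2=\deg f\cdot x^2$ for exceptional cycles. Then $(a'+b')^2=\deg f\cdot(2a)^2$, and dividing by $\deg f$ yields $a^2$. The paper already asserts this step in deriving \eqref{eq:discriminant-bound}, so it may simply be cited.

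For the upper bound, take $f=\mathrm{id}_U$, which has degree $1$, together with the split filtration
\[
 0\to A\to A\oplus B\to B\to 0 .
\]
This is an admissible sequence of the form \eqref{eq:admissible-filtration}. By \eqref{eq:relative-pairing} its value is $\crel(A)\cdot\crel(B)=a\cdot a=a^2$. Since $\cw(\FF)$ is an infimum over all such data, $\cw(\FF)\le a^2$.

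Combining the two bounds gives $\cw(\FF)=a^2$. The only point needing care is the pullback compatibility in the lower bound. That is the fact that on $V$, whose exceptional set $f^{-1}(E)$ is again negative definite, the pulled-back cycle $f^*\crel(L)$ has the correct degrees on every exceptional component. This follows from the projection formula $f^*x\cdot D=x\cdot f_*D$ together with $\deg(f^*L|_D)=\deg(L|_{f_*D})$. I expect this to be the only nontrivial ingredient, and it is already implicit in the derivation of \eqref{eq:discriminant-bound}.
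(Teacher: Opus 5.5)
Your proposal is correct and is essentially the paper's proof. The split filtration on the identity cover gives $\cw(\FF)\le A\cdot B=a^2$, and $\crel(\FF)=2a$ together with \eqref{eq:discriminant-bound} gives the reverse inequality. Your discussion of pullback compatibility just unpacks the derivation of that bound, which the paper cites without further argument.
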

\begin{proof}
The splitting on the identity cover gives
$\cw(\FF)\le A\cdot B=a^2$.
Since $\crel(\FF)=2a$, the opposite inequality follows from
\eqref{eq:discriminant-bound}.
\end{proof}

\subsection{Passing from ordinary to logarithmic differentials}

Logarithmic differential sheaves on resolutions of surface singularities
have also been considered in connection with plurigenus invariants; in
particular, they occur in the author's joint work with Yau on the second plurigenus of
smoothable Gorenstein surface singularities \cite{DuYau}.

For a reduced simple normal crossings divisor $E$, the residue sequence is
\begin{equation}\label{eq:residue-sequence}
 0\longrightarrow\Omega_U^1
 \longrightarrow\Omega_U^1(\log E)
 \longrightarrow\bigoplus_{i=1}^s\OO_{E_i}
 \longrightarrow0.
\end{equation}
The determinant of the middle term is $\OO_U(K_U+E)$.

\begin{proposition}[\wahlresidueattribution]\label{prop:residue-identity}
For every good resolution $(U,E)\to(X,o)$,
\begin{equation}\label{eq:residue-identity}
 \cw(\Omega_U^1)
 =\cw\bigl(\Omega_U^1(\log E)\bigr)+\etop(E).
\end{equation}
\end{proposition}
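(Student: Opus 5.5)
The residue identity is Wahl's Proposition~2.14, and the natural route is to compare the infima defining $\cw$ for the two bundles through the residue sequence \eqref{eq:residue-sequence}. I will first show that $\Omega_U^1$ and $\Omega_U^1(\log E)$ agree away from $E$. Their difference is recorded by a correction that depends only on the local structure along $E$: the determinants differ by $\OO_U(E)$, and along each $E_i$ there is an elementary modification.

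\textbf{Step 1: a discrete-invariant formula.} Following Wahl (and Langer), I would express $\cw(\FF)$ for a rank-two bundle as
\[
 \frac14\crel(\FF)^2+\delta(\FF),
\]
where $\delta$ is a local discriminant-type correction. I expect $\delta$ to behave additively under elementary modifications along exceptional curves. Concretely, if
\[
 0\to\FF\to\GG\to\bigoplus_i\QQ_i\to0
\]
with $\QQ_i$ a line bundle on $E_i$, then one should have
\[
 \cw(\GG)-\cw(\FF)=\crel(\GG)\cdot\crel(\GG)\text{-type terms}
 -\sum_i\deg\QQ_i+\tfrac12 E_i^2\ \text{-type terms}.
\]
This is the analogue of the global formula $c_2(\GG)=c_2(\FF)+c_1(\FF)\cdot D+\chi$-type correction for an elementary transformation.

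\textbf{Step 2: transporting filtrations.} To prove such a formula directly from the definition \eqref{eq:wahl-definition}, take any cover $f:V\to U$ and any filtration $0\to A\to f^*\Omega^1_U(\log E)\to B\to0$. Intersecting with the pulled-back subsheaf $f^*\Omega^1_U$ produces a filtration $0\to A(-D_A)\to f^*\Omega^1_U\to B(-D_B)\to0$. Here $D_A+D_B=f^*E$, and $D_A,D_B$ are exceptional, possibly after blowing up $V$ to make the saturations locally free. Conversely, saturating a filtration of $f^*\Omega^1_U$ inside $f^*\Omega^1_U(\log E)$ reverses the process.

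The relative intersection then changes by
\[
 (A-D_A)\cdot(B-D_B)-A\cdot B.
\]
I would compute this change using $\deg(A|_{E_i'})$ together with the residue map. The key point is that the residue restricted to $A$ on a component $E_i'$ of $f^{-1}(E)$ either vanishes identically or not, and this decides whether $E_i'$ lies in $D_A$ or in $D_B$. Summing over all components and dividing by $\deg f$, the difference should come out as $-(K+E)\cdot E+E^2$-type quantities, plus a correction coming from the nodes. By adjunction this totals
\[
 -\sum_i(2g_i-2)-\#\{\text{nodes}\}=\etop(E),
\]
up to sign.

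\textbf{Step 3: main obstacle.} The hard part is that the infimum need not be attained. I must therefore show that the bijection between filtrations is compatible with the infimum, so that the correction term is independent of the filtration once it has been optimized. If the correction were filtration-dependent, one would only get an inequality.

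My first attempt at this step would be to pass to a cover $V$ on which the optimal (or near-optimal) $A$ is transverse to the log structure. That means arranging the residue of $A$ to be nonzero on each component, or zero on each component, uniformly. Any other filtration can then be compared with this one using the negative definiteness bound \eqref{eq:discriminant-bound}. Failing that, I would simply cite \wahlresidueattribution, which is exactly this statement, since the paper attributes it to Wahl.
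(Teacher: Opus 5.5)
Your proposal has a genuine gap: it never establishes the exact shift between the two infima. Step~1 never pins down the correction; it is written only in terms of ``type terms''. Step~2 ends with ``should come out as \dots\ up to sign''. Step~3 leaves open exactly the point that matters, namely whether the correction depends on the filtration.

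The remedy you suggest cannot settle that point. The bound \eqref{eq:discriminant-bound} is one-sided and concerns a single bundle, so it gives no comparison between the values of two filtrations. Since the infimum need not be attained, there is also no ``optimal'' $A$ to make transverse. Falling back on \cite[Proposition~2.14]{Wahl} is citing the statement itself.

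The paper instead applies Wahl's general change-of-bundle formula \cite[Theorem~2.7]{Wahl}, i.e.\ \eqref{eq:change-bundle}, to the residue sequence with $\FF=\Omega^1_U$, $\GG=\Omega^1_U(\log E)$ and $\QQ=\bigoplus_i\OO_{E_i}$. Then $Z=E$, $\det\GG=K_U+E$, $\chi(\QQ)=\sum_i(1-g_i)$, and $-\det\GG\cdot Z+\frac12 Z\cdot(Z+K_U)=-\frac12(K_U+E)\cdot E$. Adjunction $(K_U+E)\cdot E_i=2g_i-2+v_i$, together with $\sum_iv_i=2t$, gives exactly $\sum_i(2-2g_i)-t=\etop(E)$, with no sign ambiguity.

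Transporting filtrations is a legitimate way to prove the change-of-bundle formula directly from \eqref{eq:wahl-definition}, and it would be more self-contained than the paper's citation. The missing idea is that the shift is the same for every filtration, by Riemann--Roch on compact curves. Given $0\to A\to f^*\GG\to B\to0$, put $A'=A\cap f^*\FF=A(-D_A)$. Then $f^*\FF/A'\cong I_W\otimes B(-D_B)$ with $W$ zero-dimensional and $D_A+D_B=f^*Z$, and $\chi(f^*\QQ)=\chi(A|_{D_A})+\chi(B|_{D_B})+\ell(W)$. Riemann--Roch on $D_A$ and $D_B$ gives, for every filtration,
\[
 (A-D_A)\cdot(B-D_B)+\ell(W)
 =A\cdot B-f^*\det\GG\cdot f^*Z+\tfrac12 f^*Z\cdot(f^*Z+K_V)+\chi(f^*\QQ).
\]

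Two further facts, absent from your sketch, are needed. First, blowing up a point of $W$ at which the saturation acquires multiplicity $k$ raises the pairing by $k^2$ and lowers $\ell(W)$ by $k^2$. So the colength must be tracked, not merely ``made locally free''; iterating yields an admissible filtration whose value is exactly the left side. Second, the right side equals $\deg f$ times the corresponding quantity on $U$, because the ramification contributions cancel in $\tfrac12 f^*Z\cdot K_V+\chi(f^*\QQ)$. Saturating a filtration of $f^*\FF$ inside $f^*\GG$ gives the analogous identity with $\ell(W)$ on the $\GG$ side. Hence the two infima differ by a constant, and no optimization issue arises.

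Finally, your dichotomy ``the residue of $A$ vanishes on $E_i'$ or not'' is correct only where $f$ is unramified along $E_i'$. With ramification index $e$, the multiplicity of $D_A$ along $E_i'$ can be any integer from $0$ to $e$.
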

\begin{proof}
Wahl's change-of-bundle formula
\cite[Theorem~2.7]{Wahl} states that if
\[
 0\longrightarrow\FF\longrightarrow\GG
 \longrightarrow\QQ\longrightarrow0
\]
is an exact sequence of rank-two bundles with exceptional quotient,
and $Z=\det\GG-\det\FF$, then
\begin{equation}\label{eq:change-bundle}
 \cw(\FF)=\cw(\GG)-\det(\GG)\cdot Z
 +\frac12Z\cdot(Z+K_U)+\chi(\QQ).
\end{equation}
Applied to \eqref{eq:residue-sequence}, this becomes
\begin{equation}\label{eq:residue-intermediate}
 \cw(\Omega_U^1)-\cw\bigl(\Omega_U^1(\log E)\bigr)
 =-\frac12(K_U+E)\cdot E+\sum_{i=1}^s(1-g_i),
\end{equation}
where $g_i=g(E_i)$.

Let $t$ be the total number of nodes of $E$, and let $v_i$ be the number
of these nodes lying on $E_i$. By adjunction,
\[
 (K_U+E)\cdot E_i=2g_i-2+v_i,
 \qquad \sum_i v_i=2t.
\]
The right-hand side of \eqref{eq:residue-intermediate} therefore equals
\[
 -\frac12\left(\sum_i(2g_i-2)+2t\right)
 +\sum_i(1-g_i)
 =\sum_i(2-2g_i)-t.
\]
This is exactly $\etop(E)$.
\end{proof}

All appearances of $\chi(\QQ)$ above refer to the coherent Euler
characteristic of a sheaf with compact support. In contrast,
$\etop(E)$ denotes the topological Euler characteristic of the reduced
exceptional curve.

\section{A contractible union of two graphs}\label{sec:geometry}

Let $C$ be a smooth connected projective complex curve and let
$\sigma:C\to C$ be a nontrivial involution. Set
\[
 r:=\#\Fix(\sigma)>0,\qquad d:=2g(C)-2,
\]
and assume $d>r$. On $S=C\times C$, put
\[
 E_1:=\Delta_C=\{(p,p)\mid p\in C\},\qquad
 E_2:=\Gamma_\sigma=\{(p,\sigma(p))\mid p\in C\},
 \qquad E:=E_1+E_2.
\]
We first verify every geometric condition needed for the contraction.

\begin{lemma}\label{lem:graph-normal}
For an automorphism $\tau$ of $C$, the normal bundle of its graph
$\Gamma_\tau\subset C\times C$ is isomorphic to $\tau^*T_C$, and hence
to $T_C$. In particular,
\[
 \Gamma_\tau^2=2-2g(C)=-d.
\]
\end{lemma}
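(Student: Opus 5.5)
The approach is to reduce to the diagonal by an automorphism of the surface, and then identify the normal bundle of the diagonal with the tangent bundle of $C$.

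Consider the automorphism $\Phi=\mathrm{id}\times\tau$ of $S=C\times C$, $(p,q)\mapsto(p,\tau(q))$. It carries the diagonal $\Delta_C$ isomorphically onto $\Gamma_\tau$. Hence the normal bundle $N_{\Gamma_\tau/S}$ is the pushforward under $\Phi|_{\Delta_C}$ of $N_{\Delta_C/S}$. Identifying both curves with $C$ via the first projection, this says $N_{\Gamma_\tau}\cong N_{\Delta_C}$ as line bundles on $C$. To make the twist by $\tau$ visible, I will instead compute directly, as follows.

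Along $\Gamma_\tau$ the tangent space of $S$ at $(p,\tau(p))$ is $T_pC\oplus T_{\tau(p)}C$. The tangent line of $\Gamma_\tau$ is the image of $v\mapsto(v,d\tau_p v)$. Projection onto the second summand gives an isomorphism
\[
 N_{\Gamma_\tau,(p,\tau(p))}=\bigl(T_pC\oplus T_{\tau(p)}C\bigr)/\{(v,d\tau_pv)\}\;\xrightarrow{\ \sim\ }\;T_{\tau(p)}C .
\]
The inverse is $w\mapsto[(0,w)]$. Since these maps vary holomorphically in $p$, the identification of $\Gamma_\tau$ with $C$ via $p$ yields $N_{\Gamma_\tau}\cong\tau^*T_C$. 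Because $\tau$ is an automorphism, $d\tau:T_C\to\tau^*T_C$ is a bundle isomorphism, so $\tau^*T_C\cong T_C$. (Alternatively, the first summand gives $N\cong T_C$ directly.)

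Finally, $\Gamma_\tau^2=\deg N_{\Gamma_\tau}=\deg T_C=2-2g(C)=-d$.

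No step is a genuine obstacle. The only point needing care is checking that the quotient map onto the second factor is well defined and nonvanishing, i.e.\ that the tangent line of the graph meets $0\oplus T_{\tau(p)}C$ trivially. This holds because its first component $v$ is arbitrary: if $(v,d\tau_pv)$ lies in $0\oplus T_{\tau(p)}C$, then $v=0$.
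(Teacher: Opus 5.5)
Your argument is correct and is essentially the paper's: both describe the normal space at $(p,\tau(p))$ as $\bigl(T_pC\oplus T_{\tau(p)}C\bigr)/\{(v,d\tau_p v)\}$ and identify it with $T_{\tau(p)}C$, which gives $\tau^*T_C$ and degree $2-2g(C)$. One wording fix: the literal projection $(v,w)\mapsto w$ does not vanish on the tangent line, so the well-defined map is the projection along the tangent line, $(v,w)\mapsto w-d\tau_p(v)$ (the paper's map); its inverse is exactly your $w\mapsto[(0,w)]$, and your transversality check is what makes it an isomorphism.
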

\begin{proof}
Under the parametrization $p\mapsto(p,\tau(p))$, the tangent sequence
of the graph is
\[
 0\longrightarrow T_C
 \xrightarrow{v\mapsto(v,d\tau(v))}
 T_C\oplus\tau^*T_C
 \longrightarrow N_{\Gamma_\tau/(C\times C)}
 \longrightarrow0.
\]
The map $(v,w)\mapsto w-d\tau(v)$ identifies the quotient with
$\tau^*T_C$. The degree of a curve's normal bundle is its
self-intersection.
\end{proof}

\begin{lemma}\label{lem:transversality}
The diagonal $E_1$ and the graph $E_2$ meet transversely at precisely
the $r$ fixed points of $\sigma$. Consequently,
\[
 E_1^2=E_2^2=-d,\qquad E_1\cdot E_2=r.
\]
\end{lemma}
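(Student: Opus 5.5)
We first determine the set-theoretic intersection. A point of $E_1\cap E_2$ has the form $(p,p)=(q,\sigma(q))$. Hence $p=q$ and $\sigma(p)=p$. So $E_1\cap E_2=\{(p,p)\mid p\in\Fix(\sigma)\}$, a set of exactly $r$ points. The set is finite because $\sigma$ is a nontrivial automorphism of a connected curve.

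Transversality is checked pointwise at a fixed point $p$. The tangent line of $E_1$ at $(p,p)$ is $\{(v,v)\}\subset T_pC\oplus T_pC$. The tangent line of $E_2$ is $\{(v,d\sigma_p(v))\}$. These lines coincide iff $d\sigma_p=\mathrm{id}$ on the one-dimensional space $T_pC$. This cannot happen for a nontrivial involution. Near a fixed point, choose a local coordinate $z$ centered at $p$ that linearizes $\sigma$; this is possible because $\sigma$ generates a finite group, so we can average a coordinate. Then $\sigma(z)=\lambda z$ with $\lambda^2=1$. If $\lambda=1$, then $\sigma$ is the identity on an open set, hence everywhere by the identity theorem, a contradiction. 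Therefore $d\sigma_p=-1$, and the two tangent lines $\{(v,v)\}$ and $\{(v,-v)\}$ are distinct. So the intersection at $(p,p)$ is transverse, with local intersection multiplicity $1$.

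Summing over the $r$ points gives $E_1\cdot E_2=r$. The self-intersections $E_1^2=E_2^2=-d$ come from Lemma~\ref{lem:graph-normal}, applied with $\tau=\mathrm{id}$ and with $\tau=\sigma$.

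The only delicate point is that $d\sigma_p\ne 1$ at a fixed point. The linearization argument above handles it. Alternatively, one can argue that a holomorphic map tangent to the identity with finite order must be the identity.
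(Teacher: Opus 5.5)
Your proof is correct and follows the paper's argument: you identify $E_1\cap E_2$ with the fixed points, show $d\sigma_p=-1$ so that the tangent lines $\{(v,v)\}$ and $\{(v,-v)\}$ are distinct, and take the self-intersections from Lemma~\ref{lem:graph-normal}. The only variation is how $d\sigma_p=1$ is excluded: you linearize $\sigma$ by averaging a coordinate and then apply the identity theorem, whereas the paper uses the iteration argument on the first nonvanishing higher-order term, which is the alternative you mention at the end.
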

\begin{proof}
A point lies in both graphs exactly when it is of the form $(p,p)$
with $\sigma(p)=p$. At such a point, $(d\sigma_p)^2=1$.
Moreover, $d\sigma_p$ cannot equal $1$: a nontrivial finite-order
holomorphic germ on a smooth complex curve cannot be tangent to the
identity. Indeed, the first nonzero higher-order term of a germ
$z\mapsto z+az^k+\cdots$ would be multiplied by its order upon
iteration, a contradiction in characteristic zero.
Thus $d\sigma_p=-1$.
The tangent lines to the two graphs are therefore generated by
$(v,v)$ and $(v,-v)$; these are distinct. Each local intersection
multiplicity is one. The self-intersections follow from
Lemma~\ref{lem:graph-normal}.
\end{proof}

The exceptional configuration consequently has intersection matrix
\begin{equation}\label{eq:graph-matrix}
 Q=\begin{pmatrix}-d&r\\r&-d\end{pmatrix}.
\end{equation}
Its eigenvectors $(1,1)$ and $(1,-1)$ have eigenvalues $-d+r$ and
$-d-r$, respectively. Both eigenvalues are strictly negative because
$d>r$. Also, $r>0$ makes $E$ connected.

Grauert's criterion states that a compact curve in a smooth complex
surface is exceptional precisely when its intersection matrix is
negative definite; see \cite[p.~367]{Grauert}. Thus, after choosing
a sufficiently small neighborhood $U$ of $E$, there is a proper
holomorphic contraction
\begin{equation}\label{eq:contraction}
 \pi:(U,E)\longrightarrow(X,o)
\end{equation}
to a normal complex surface, with $\pi^{-1}(o)=E$ and with $\pi$ an
isomorphism away from $E$. One takes the normal contraction, equivalently
the one with $\pi_*\OO_U=\OO_X$. Connectedness of $E$ ensures that it is
contracted to a single point. The smoothness of $U$ and the transverse
intersections of $E$ make \eqref{eq:contraction} a good resolution.

The point $o$ is singular and is not a quotient singularity. To see
this, recall that the minimal resolution of a quotient surface
singularity has only rational exceptional curves. A resolution of a
normal surface singularity factors through its minimal resolution by
point blowups; such blowups add only rational curves. In our resolution
both $E_1$ and $E_2$ have genus $g(C)\ge2$. This is impossible for a
quotient singularity, including a smooth point.

\begin{remark}
Two distinct components of a simple normal crossings divisor may meet
at several points. This is allowed in Wahl's definition of a good
resolution. If a convention requiring at most one intersection between
any pair of components is preferred, blowing up the intersection points
produces such a resolution. Wahl's resolution independence leaves
$\cw(X)$ unchanged.
\end{remark}

\section{Computation of the second Chern number}\label{sec:calculation}

Let $p_1,p_2:C\times C\to C$ be the projections and put
\begin{equation}\label{eq:LM-definition}
 L=p_1^*K_C|_U,\qquad M=p_2^*K_C|_U.
\end{equation}
Because $U$ is an open subset of a product of smooth curves, there is
an actual splitting of vector bundles
\begin{equation}\label{eq:cotangent-splitting}
 \Omega_U^1\simeq L\oplus M.
\end{equation}
Every restriction $p_j|_{E_i}$ is an isomorphism. Therefore
\begin{equation}\label{eq:LM-degrees}
 L\cdot E_i=M\cdot E_i=d\qquad(i=1,2).
\end{equation}

\subsection{Solving for the relative classes}

Write $\crel(L)=a_1E_1+a_2E_2$. Equations
\eqref{eq:relative-class} and \eqref{eq:LM-degrees} give
\[
 -da_1+ra_2=d,\qquad ra_1-da_2=d.
\]
Subtracting the two equations yields
$(d+r)(a_2-a_1)=0$, so $a_1=a_2$. Either equation now gives
$a_1=-d/(d-r)$.
The degree vector of $M$ is the same, and hence
\begin{equation}\label{eq:LM-classes}
 \crel(L)=\crel(M)=-\frac{d}{d-r}E.
\end{equation}
The square of the reduced exceptional cycle is
\begin{equation}\label{eq:E-square}
 E^2=E_1^2+2E_1E_2+E_2^2=-2(d-r).
\end{equation}
Lemma~\ref{lem:equal-classes}, or equivalently Wahl's splitting formula,
therefore gives
\begin{equation}\label{eq:ordinary-value}
 \cw(\Omega_U^1)=L\cdot M
 =\frac{d^2}{(d-r)^2}E^2
 =-\frac{2d^2}{d-r}.
\end{equation}

For comparison, the intersection of the extended line bundles on the
compact surface is
\[
 (p_1^*K_C\cdot p_2^*K_C)_{C\times C}=d^2.
\]
This positive global number is not the relative number in
\eqref{eq:ordinary-value}. The latter is obtained by solving against
the exceptional intersection matrix, as in
\eqref{eq:relative-pairing}.

\subsection{The logarithmic correction}

Each $E_i$ has topological Euler characteristic $2-2g(C)=-d$, and the
two components meet at $r$ points. Thus
\begin{equation}\label{eq:Euler-graphs}
 \etop(E)=-2d-r.
\end{equation}
Applying Proposition~\ref{prop:residue-identity} gives
\begin{align}
 \cw(X)
 &=\cw(\Omega_U^1)-\etop(E)\notag\\
 &=-\frac{2d^2}{d-r}+2d+r\notag\\
 &=-\frac{r(d+r)}{d-r}.
 \label{eq:main-calculation}
\end{align}
The last expression is strictly negative. Together with the geometric
construction, this proves Theorem~\ref{thm:main}.

\subsection{The logarithmic first Chern class}

Since
$\det\Omega_U^1(\log E)=\OO_U(K_U+E)$, its relative first Chern class
is the unique rational exceptional cycle $Z$ satisfying
$Z\cdot E_i=(K_U+E)\cdot E_i$ for $i=1,2$. Adjunction and
$E_1\cdot E_2=r$ give
\[
 (K_U+E)\cdot E_i
 =(K_U+E_i)\cdot E_i+(E-E_i)\cdot E_i
 =2g(C)-2+r=d+r.
\]
The two intersection conditions have the same right-hand side, so the
uniqueness of $Z$ and the symmetry exchanging $E_1$ and $E_2$ imply
$Z=aE$ for some $a\in\mathbb Q$. Since
$E\cdot E_i=-d+r=-(d-r)$, one obtains
$a=-(d+r)/(d-r)$. Finally, $E^2=-2(d-r)$, and hence
\begin{equation}\label{eq:log-c1}
 \crel\bigl(\Omega_U^1(\log E)\bigr)
 =-\frac{d+r}{d-r}E,
 \qquad
 \crel\bigl(\Omega_U^1(\log E)\bigr)^2
 =-\frac{2(d+r)^2}{d-r}.
\end{equation}

\section{A filtration realizing the negative value}\label{sec:filtration}

The preceding calculation determines the invariant exactly. We now
construct a filtration that gives the same number directly in the
infimum \eqref{eq:wahl-definition}. This also provides a direct
negative upper bound without using the residue identity.

Put $\GG=\Omega_U^1(\log E)$ and retain $L$ from
\eqref{eq:LM-definition}. The differential of $p_1$ gives an injective
sheaf map
\[
 L\longrightarrow\GG.
\]
Away from the nodes of $E$ this is a line subbundle. Off $E$, this
follows from the smoothness of the projection. At a smooth point of
$E$, one may use coordinates $(z,t)$ with $p_1=z$ and $E=(t=0)$;
the logarithmic frame is $(dz,dt/t)$, in which the map has no zero.

At a fixed point of the involution, choose a coordinate $z$ on $C$
in which $\sigma(z)=-z$. Such a coordinate exists, for example by
replacing any local parameter $t$ with $(t-t\circ\sigma)/2$.
Using coordinates $(z,w)$ on the product, put
\[
 a=w-z,\qquad b=w+z.
\]
Then $E=(ab=0)$ and $p_1=z=(b-a)/2$. In the logarithmic frame
$(da/a,db/b)$,
\begin{equation}\label{eq:local-differential}
 dz=-\frac{a}{2}\frac{da}{a}+\frac{b}{2}\frac{db}{b}.
\end{equation}
The zero ideal is the reduced maximal ideal $(a,b)$.

Let $Z$ be the reduced set of the $r$ nodes, and let
\[
 N=\det\GG\otimes L^{-1}=\OO_U(K_U+E)\otimes L^{-1}.
\]
The map above has the exact sequence
\begin{equation}\label{eq:singular-filtration}
 0\longrightarrow L\longrightarrow\GG
 \longrightarrow I_Z\otimes N\longrightarrow0.
\end{equation}
Indeed, locally at a node its cokernel is the rank-one torsion-free
module given by the ideal $(a,b)$, with determinant $N$.

Blow up all these nodes:
\[
 \beta:\widehat U\longrightarrow U,
\]
and let $D$ be the sum of the $r$ exceptional curves. The pullback of
\eqref{eq:local-differential} has a common factor defining $D$.
After removing that factor, its components have no common zero.
For instance, in the chart $a=t$, $b=tu$, they are proportional to
$(-1,u)$. Thus the saturated pullback of $L$ is a line subbundle and
we obtain an admissible filtration
\begin{equation}\label{eq:resolved-filtration}
 0\longrightarrow\beta^*L\otimes\OO_{\widehat U}(D)
 \longrightarrow\beta^*\GG
 \longrightarrow\beta^*N\otimes\OO_{\widehat U}(-D)
 \longrightarrow0.
\end{equation}

We have $N\cdot E_i=(d+r)-d=r$, so
\[
 \crel(N)=-\frac{r}{d-r}E,
 \qquad L\cdot N=-\frac{2dr}{d-r}.
\]
Since pullback classes have zero intersection with the
$\beta$-exceptional curves and $D^2=-r$, the value supplied by
\eqref{eq:resolved-filtration} is
\begin{align}
 (\beta^*L+D)\cdot(\beta^*N-D)
 &=L\cdot N-D^2\notag\\
 &=-\frac{2dr}{d-r}+r
 =-\frac{r(d+r)}{d-r}.
 \label{eq:filtration-value}
\end{align}
Here addition denotes tensor product with the indicated divisor
line bundle when used inside an intersection expression.
Because $\deg\beta=1$, this is also the normalized value in
\eqref{eq:wahl-definition}. It agrees with the exact value computed
in Section~\ref{sec:calculation}.

\section{An explicit example with second Chern number \texorpdfstring{$-4$}{-4}}
\label{sec:explicit}

Let $C$ be the smooth projective model of
\begin{equation}\label{eq:explicit-curve}
 y^2=(x^2-1)(x^2-2)(x^2-3)(x^2-4)(x^2-5).
\end{equation}
The right-hand side has ten distinct roots. The map to
$\mathbb P^1_x$ is a double cover with ten simple branch points and
no ramification at infinity. Riemann--Hurwitz gives
\[
 2g(C)-2=2(-2)+10=6,
\]
so $g(C)=4$.

The involution
\begin{equation}\label{eq:explicit-involution}
 \sigma(x,y)=(-x,y)
\end{equation}
has exactly two finite fixed points, namely the points over $x=0$
with $y^2=-120$. At either point $y\ne0$, so $x$ is a local parameter
and the derivative of the involution is $-1$.
It remains to check infinity. Put
\[
 u=x^{-1},\qquad v=y/x^5.
\]
Then the equation becomes
\[
 v^2=\prod_{j=1}^5(1-ju^2).
\]
The two points at infinity are $(u,v)=(0,1)$ and $(0,-1)$.
The involution acts by $(u,v)\mapsto(-u,-v)$, so it exchanges these
points. There are no additional fixed points. Thus $d=6$ and $r=2$.

For $E=\Delta_C+\Gamma_\sigma$, the numerical data are
\begin{equation}\label{eq:explicit-matrix}
 Q=\begin{pmatrix}-6&2\\2&-6\end{pmatrix},\qquad
 Q^{-1}=\begin{pmatrix}-3/16&-1/16\\-1/16&-3/16\end{pmatrix}.
\end{equation}
The eigenvalues are $-4$ and $-8$. The relative classes and the Euler
characteristic are
\[
 \crel(L)=\crel(M)=-\frac32E,\qquad
 E^2=-8,\qquad \etop(E)=-14.
\]
Therefore
\begin{equation}\label{eq:explicit-value}
 \cw(\Omega_U^1)=\frac94(-8)=-18,
 \qquad
 \cw(X)=-18-(-14)=-4.
\end{equation}
For this example the logarithmic first Chern class is $-2E$, with
square $-32$.

This exhibits a nonquotient normal surface singularity contradicting
the zero value asserted in \cite[Conjecture~2.12]{Wahl}.

\section{Branched double covers and unbounded negative values}
\label{sec:families}

We next show that the numerical hypotheses of
Theorem~\ref{thm:main} occur in families with arbitrarily large $r$.
Let $B$ be a smooth projective complex curve of genus $h\ge2$.
Choose $2m$ distinct points on $B$, where $m\ge1$, and write their
sum as the reduced divisor $R$.

There is a line bundle $A$ on $B$ with
\begin{equation}\label{eq:square-root}
 A^{\otimes2}\simeq\OO_B(R).
\end{equation}
To see this, choose any line bundle $A_0$ of degree $m$.
Then $\OO_B(R)\otimes A_0^{-2}$ has degree zero. Multiplication by
$2$ on the complex abelian variety $\Pic^0(B)$ is surjective, so
there is $T\in\Pic^0(B)$ satisfying
$T^{\otimes2}\simeq\OO_B(R)\otimes A_0^{-2}$. Taking
$A=A_0\otimes T$ proves \eqref{eq:square-root}.

Let $s\in H^0(B,A^{\otimes2})$ be the section with divisor $R$.
The corresponding double cover is
\begin{equation}\label{eq:double-cover}
 q:C=\Spec_B\bigl(\OO_B\oplus A^{-1}\bigr)\longrightarrow B,
\end{equation}
where multiplication $A^{-1}\otimes A^{-1}\to\OO_B$ is defined by
$s$. Locally this is the equation $w^2=s(z)$.
At a zero of $s$, that zero is simple, and the cover is smooth with
local form $w^2=z$. Away from $R$ it is \'{e}tale. The cover is connected:
if it were disconnected, its two components would each map with
degree one to the normal curve $B$, and hence would be isomorphic to
$B$, which would leave no branch points. Thus $C$ is a smooth
connected projective curve.

The deck involution fixes exactly the $r=2m$ ramification points.
Riemann--Hurwitz gives
\begin{equation}\label{eq:RH-family}
 d=2g(C)-2=2(2h-2)+r=4h-4+r.
\end{equation}
In particular $d-r=4h-4>0$, so the graph configuration is negative
definite for every $m\ge1$.

\begin{corollary}\label{cor:double-cover-values}
The contraction associated with the double cover
\eqref{eq:double-cover} satisfies
\begin{equation}\label{eq:double-cover-value}
 \cw(X)=-r-\frac{r^2}{2h-2}
 =-2m-\frac{2m^2}{h-1}.
\end{equation}
\end{corollary}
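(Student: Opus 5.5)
The plan is to apply Theorem~\ref{thm:main} directly to the curve $C$ and its deck involution $\sigma$ constructed in \eqref{eq:double-cover}, and then substitute the Riemann--Hurwitz relation \eqref{eq:RH-family}. First I will check the hypotheses of the theorem one at a time. The curve $C$ is smooth, connected and projective, as shown above. Its deck involution $\sigma$ is nontrivial, since $q$ has degree two. It has exactly $r=2m>0$ fixed points, because the fixed points of the deck transformation of a double cover are precisely the ramification points, and these lie over the $2m$ points of $R$. Finally, \eqref{eq:RH-family} gives $d-r=4h-4>0$ because $h\ge2$, so $d>r$. Theorem~\ref{thm:main} therefore applies. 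It provides the contraction $(X,o)$ of $E=\Delta_C+\Gamma_\sigma$, shows that this singularity is nonquotient, and gives $\cw(X)=-r(d+r)/(d-r)$.

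Next comes the arithmetic. Substituting $d=4h-4+r$ gives $d-r=4h-4$ and $d+r=4h-4+2r$. Then
\[
 \cw(X)=-\frac{r(4h-4+2r)}{4h-4}=-r-\frac{2r^2}{4h-4}=-r-\frac{r^2}{2h-2}.
\]
With $r=2m$ this becomes $-2m-4m^2/(2h-2)=-2m-2m^2/(h-1)$, which is \eqref{eq:double-cover-value}.

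I expect no real obstacle here. The only point that needs care is the count of fixed points. Over a point of $B\setminus R$ the cover is étale, and $\sigma$ swaps the two distinct preimages, so there are no fixed points there. Over a point of $R$, the local form is $w^2=z$ with $\sigma(w)=-w$, which has exactly one fixed point, namely the single preimage. This confirms $r=2m$. As a sanity check, taking $h=2$ gives $-2m-2m^2=-2m(m+1)$. This is the value stated in Corollary~\ref{cor:unbounded-intro}, and it makes clear that the genus-two specialization is how that corollary will follow.
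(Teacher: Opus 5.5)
Your proposal is correct and follows the paper's proof: the paper also substitutes the Riemann--Hurwitz relation $d=4h-4+r$ into $\cw(X)=-r(d+r)/(d-r)$ and then sets $r=2m$. Your check of the hypotheses of Theorem~\ref{thm:main} (nontrivial involution, $r=2m$ fixed points, $d-r=4h-4>0$) matches what the paper verifies in the text just before the corollary.
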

\begin{proof}
Substituting \eqref{eq:RH-family} into
\eqref{eq:main-formula} yields
\[
 -\frac{r(4h-4+2r)}{4h-4}
 =-r-\frac{r^2}{2h-2}.
\]
Now use $r=2m$.
\end{proof}

Taking $h=2$ gives $g(C)=m+3$ and
\[
 \cw(X_m)=-2m(m+1),
\]
proving Corollary~\ref{cor:unbounded-intro}. The genera of the exceptional components
and their intersection numbers vary with $m$; no claim of unboundedness
for a fixed resolution graph is being made.

\section{Relation with the asymptotic Riemann--Roch formula}
\label{sec:rr}

For a vector bundle $\FF$ on a good resolution, Wahl uses the modified
Euler characteristic
\begin{equation}\label{eq:modified-euler}
 \chi'(\FF)=
 \dim_{\mathbb C}\frac{H^0(U\setminus E,\FF)}{H^0(U,\FF)}
 +\dim_{\mathbb C}H^1(U,\FF).
\end{equation}
The asymptotic formula in
\cite[Main Conjecture~3.3]{Wahl} is
\begin{equation}\label{eq:RR-formula}
 \chi'(\Sym^n\FF)
 =-\frac{n^3}{6}\bigl(\crel(\FF)^2-\cw(\FF)\bigr)+O(n^2).
\end{equation}
Its validity is a different issue from vanishing of
$\cw(\Omega_U^1(\log E))$.

\begin{proposition}\label{prop:RR-specific}
For every singularity constructed in Theorem~\ref{thm:main},
formula \eqref{eq:RR-formula} holds for both
$\FF=\Omega_U^1$ and $\FF=\Omega_U^1(\log E)$.
More explicitly,
\begin{align}
 \chi'(\Sym^n\Omega_U^1)
 &=\frac{d^2}{d-r}n^3+O(n^2),
 \label{eq:RR-ordinary}\\
 \chi'\bigl(\Sym^n\Omega_U^1(\log E)\bigr)
 &=\frac{(d+r)(2d+r)}{6(d-r)}n^3+O(n^2).
 \label{eq:RR-log}
\end{align}
\end{proposition}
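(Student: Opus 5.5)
The plan is to reduce both formulas to the asymptotic Riemann--Roch for \emph{line bundles} on a resolution neighbourhood, using a line-bundle filtration of $\FF$. Its symmetric powers then inherit filtrations whose graded pieces are line bundles. For a line bundle $A$ on $U$ (or on $\widehat U$) I will use the local Riemann--Roch statement
\[
 \chi'(A)=-\tfrac12\,\crel(A)\cdot\bigl(\crel(A)-\crel(K_U)\bigr)+O(1).
\]
The $O(1)$ must be bounded uniformly by a constant depending only on the neighbourhood. I would justify this by comparing $A$ with $A(-Y)$ for a large exceptional cycle $Y$ and using Grauert-type vanishing of $H^1$ after twisting. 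The remaining error comes from the finitely many classes of $\Pic$ modulo the lattice of exceptional cycles that affect the constant term. I also need $\chi'$ to be additive, up to an error $O(n^2)$, on the exact sequences arising from the filtration of $\Sym^n\FF$. This holds because $\chi'$ is the sum of $h^1$ and the dimension of the local cohomology $H^1_E$, and because these long exact sequences have only $O(n)$ terms, each with an error $O(n)$.

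For $\FF=\Omega_U^1$ this is immediate from the splitting \eqref{eq:cotangent-splitting}. We have $\Sym^n\Omega_U^1=\bigoplus_{i=0}^nL^i\otimes M^{n-i}$, and by \eqref{eq:LM-classes} every summand has relative class $-\frac{nd}{d-r}E$. Its square is therefore $-2n^2d^2/(d-r)$, independent of $i$. Each summand thus contributes $n^2d^2/(d-r)+O(n)$, and summing over the $n+1$ summands gives \eqref{eq:RR-ordinary}. Comparing with \eqref{eq:ordinary-value} and $\crel(\Omega^1_U)^2=-8d^2/(d-r)$ shows that this is exactly $-\frac{n^3}{6}\bigl(\crel^2-\cw\bigr)$.

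For $\GG=\Omega_U^1(\log E)$, I would use the admissible filtration \eqref{eq:resolved-filtration} on the blowup $\widehat U$, with $L'=\beta^*L+D$ and $N'=\beta^*N-D$. First, $\chi'(\Sym^n\GG)=\chi'(\beta^*\Sym^n\GG)$, since $R^1\beta_*\OO_{\widehat U}=0$ and $\beta$ is an isomorphism off the nodes, so $H^1$ and the sections off $E$ are unchanged. On $\widehat U$, $\Sym^n\beta^*\GG$ is filtered with graded pieces $L'^{\,i}\otimes N'^{\,n-i}$. Write $\ell=\crel(L')$ and $\nu=\crel(N')$. Then
\[
 \sum_{i=0}^n\bigl(i\ell+(n-i)\nu\bigr)^2=\tfrac{n^3}{3}\bigl(\ell^2+\ell\nu+\nu^2\bigr)+O(n^2),
 \qquad
 \ell^2+\ell\nu+\nu^2=(\ell+\nu)^2-\ell\nu .
\]
Here $(\ell+\nu)^2=\crel(\GG)^2=-2(d+r)^2/(d-r)$ by \eqref{eq:log-c1}, and $\ell\nu=-r(d+r)/(d-r)$ by \eqref{eq:filtration-value}. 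Hence
\[
 \chi'\bigl(\Sym^n\GG\bigr)=-\tfrac{n^3}{6}\Bigl(-\tfrac{2(d+r)^2}{d-r}+\tfrac{r(d+r)}{d-r}\Bigr)+O(n^2)=\tfrac{(d+r)(2d+r)}{6(d-r)}\,n^3+O(n^2),
\]
which is \eqref{eq:RR-log}. Since $\ell\nu$ coincides with the value of $\cw(\GG)$ in \eqref{eq:main-calculation}, this is also \eqref{eq:RR-formula}. The same computation, run with $\ell=\nu$, reproduces the first case.

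The main obstacle is the uniformity of the error terms. The line-bundle Riemann--Roch needs an $O(1)$, or at least $O(n)$, error uniformly over the $n+1$ graded pieces. Additivity of $\chi'$ along the filtration also has to hold with total error $O(n^2)$; this is delicate because $H^0(U\setminus E,\cdot)$ is infinite-dimensional. I would handle this by rewriting $\chi'$ as $h^1(U,\cdot)+h^1_E(U,\cdot)$, which is finite. Then I would check that the connecting maps in the associated long exact sequences contribute only boundedly, using $H^2(U,\cdot)=0$ on the Stein-contractible neighbourhood. The possible failure of exact additivity comes from $H^2_E$, and bounding it carefully is the step I expect to require the most work.
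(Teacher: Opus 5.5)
Your treatment of $\Omega_U^1$ is fine. There $\Sym^n\Omega_U^1=\bigoplus_i L^i\otimes M^{n-i}$ is a direct sum, so $\chi'$ is exactly additive and only a uniform line-bundle Riemann--Roch is needed. Your stated reason for uniformity is wrong: $\Pic(U)$ modulo exceptional twists is not finite, since here $p_g\ge h^1(\OO_E)>0$, and the summands differ by powers of the numerically trivial bundle $M\otimes L^{-1}$. This is repairable: the constant $\chi'(A)+\frac12\crel(A)\cdot(\crel(A)-\crel(K_U))$ is invariant under exceptional twists and is bounded on bundles of bounded degrees.

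The genuine gap is in the logarithmic case, at exactly the step you flag, and it cannot be closed by bounding error terms. For $0\to F'\to F\to F''\to0$ on $U$, the long exact sequences give
\[
 \chi'(F)=\chi'(F')+\chi'(F'')-\ell\Bigl(\operatorname{coker}\bigl(H^0(U\setminus E,F)\to H^0(U\setminus E,F'')\bigr)\Bigr).
\]
So $\chi'$ is only subadditive. Also, $\chi'=h^1+\dim\ker(H^1_E\to H^1)$, not $h^1+h^1_E$.

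No bound of order $n$ per step can hold for general line-bundle filtrations on $U$. Otherwise the leading coefficient $-\frac16(\crel(\FF)^2-A\cdot B)$ would force $A\cdot B$ to be the same for all of them, whereas $\cw$ is a genuine infimum. For example, suppose $\mathfrak m\OO_U=\OO_U(-Z)$. Two general elements of $\mathfrak m$ give $0\to\OO_U(Z)\to\OO_U^{2}\to\OO_U(-Z)\to0$ with $A\cdot B=-Z^2>0=\cw(\OO_U^2)$. Here $\chi'(\Sym^n\OO_U^2)=(n+1)p_g$, whereas the graded pieces contribute about $-Z^2n^3/6$. Hence your filtration argument proves only
\[
 \chi'(\Sym^n\GG)\le\frac{(d+r)(2d+r)}{6(d-r)}n^3+O(n^2).
\]
The claim that the total defect along \eqref{eq:resolved-filtration} is $O(n^2)$ is equivalent to \eqref{eq:RR-log} itself. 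It is not a consequence of counting terms.

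The missing idea is to compare $\GG$ with $\Omega_U^1$, which agrees with it off $E$, rather than filtering by bundles whose quotients are not supported on $E$. For $0\to\Sym^n\Omega_U^1\to\Sym^n\GG\to\QQ_n\to0$ the sections over $U\setminus E$ coincide. The same long exact sequence then gives the exact identity $\chi'(\Sym^n\Omega_U^1)=\chi'(\Sym^n\GG)+\chi(\QQ_n)$. This is the mechanism behind Wahl's Corollary~3.9, which the paper cites together with Corollary~3.6 before substituting $\crel^2$ and $\cw$.

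In this situation you can even compute $\chi(\QQ_n)$ directly. $\QQ_n$ is the restriction of $\Sym^n\Omega^1_S(\log E)/\Sym^n\Omega^1_S$ on the compact surface $S=C\times C$. Hirzebruch--Riemann--Roch on $S$ gives the leading term $\frac16\bigl(2K_S\cdot E+E^2+\etop(E)\bigr)n^3=\frac{4d+r}{6}n^3$. Subtracting this from your \eqref{eq:RR-ordinary} yields \eqref{eq:RR-log}.
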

\begin{proof}
Wahl proves the formula for direct sums of line bundles
\cite[Corollary~3.6]{Wahl}, so it holds for
$\Omega_U^1=L\oplus M$.
Wahl also proves that two rank-two bundles on the same resolution,
isomorphic away from $E$, either both satisfy the formula or both
fail it \cite[Corollary~3.9]{Wahl}.
The ordinary and logarithmic cotangent bundles agree on $U\setminus E$,
so the formula holds for the latter as well.

For the coefficients, \eqref{eq:LM-classes} and
\eqref{eq:E-square} give
\[
 \crel(\Omega_U^1)^2=-\frac{8d^2}{d-r},
 \qquad \cw(\Omega_U^1)=-\frac{2d^2}{d-r}.
\]
Substitution into \eqref{eq:RR-formula} gives
\eqref{eq:RR-ordinary}.
For the logarithmic bundle, \eqref{eq:log-c1} and
\eqref{eq:main-formula} yield
\[
 \crel\bigl(\Omega_U^1(\log E)\bigr)^2-\cw(X)
 =-\frac{(d+r)(2d+r)}{d-r},
\]
which proves \eqref{eq:RR-log}.
\end{proof}

In the explicit genus-four example, the two leading coefficients are
$9$ and $14/3$, respectively.
Thus the negative logarithmic second Chern number occurs even for a
bundle whose asymptotic Riemann--Roch formula is already covered by
Wahl's theorems. Proposition~\ref{prop:RR-specific} makes no statement
about arbitrary vector bundles on these singularities or about the
general asymptotic conjecture.

\section*{Acknowledgements}

The author used DeepSeek for language polishing, grammar correction, and structural revision during the preparation of this manuscript. The author reviewed and edited all AI-generated output and takes full responsibility for the content of the publication.

\begingroup
\footnotesize
\noindent
School of Mathematical Sciences, Key Laboratory of MEA (Ministry of Education)
and Shanghai Key Laboratory of PMMP, East China Normal University,
Rm.~312, Math. Bldg, No.~500, Dongchuan Road, Shanghai, 200241, P.~R. China.
\quad E-mail address: \texttt{rdu@math.ecnu.edu.cn}
\endgroup
\end{document}